\def\BBox{\kern  -0.2cm\hbox{\vrule width 0.2cm height 0.2cm}}
\newtheorem{lem}{Lemma}
\newtheorem{thm}{Theorem}
\newtheorem{defi}{Definition}
\begin{document}
  
\title{Transitive oriented $3$--Hypergraphs of cyclic orders}
\author{Natalia Garcia-Colin\!\! \thanks{%
pmnatz@gmail.com, Instituto de Matem\'{a}ticas}, Amanda Montejano\!\! 
\thanks{%
montejano.a@gmail.com, Facultad de Ciencias }, Luis Montejano\!\! \thanks{%
luis@matem.unam.mx, Instituto de Matem\'{a}ticas, Supported by
PAPIIT-M\'exico, project IN101912}, \\
Deborah Oliveros\!\! \thanks{%
dolivero@matem.unam.mx, Instituto de Matem\'{a}ticas, Supported by
PAPIIT-M\'exico, project IN101912} \\
{\small Universidad Nacional Aut\'{o}noma de M\'{e}xico, M\'{e}xico} }
\maketitle

\begin{abstract}
In this paper we introduce the definition of transitivity for oriented $3$%
--hypergraphs in order to study partial and complete cyclic orders. This
definition allow us to give sufficient conditions on a partial cyclic
order to be totally extendable. Furthermore, we introduce the $3$-hypergraph
associated to a cyclic permutation and characterize it in terms of cyclic
comparability $3$-hypergraphs.
\end{abstract}


\section{Introduction}

A \emph{partial order} of a set $X$ is a reflexive, asymmetric and transitive binary relation on $X$. When the relation is total the order is called a \emph{linear order}, since it captures the idea of ordering the elements of $X$ in a line. In contrast,  to capture the idea of ordering a set $X$ in a circle, it is well known that  a ternary relation is needed. 

A partial order can be graphically represented  by a transitive oriented graph. In particular, the oriented graph associated to a linear order is a transitive oriented tournament. There is no obvious way for representing graphically a partial cyclic order. In \cite{Alles1991}, the authors introduce certain subclass of cyclic orders that can be represented by means of oriented graphs. However, in the literature there is no custom manner to represent any partial cyclic order.

In this paper we introduce a definition for oriented $3$--hypergraph and a notion of transitivity,   in order to represent partial cyclic orders. Just as it occurs with linear orders and transitive tournaments, with our definition, it happens that the transitive oriented $3$--hypertournament is unique.  Among the study of intersection graphs, the class of permutation graphs has received lots of attention (see, for instance, \cite{Golumbic1980} and references therein). In particular, an old result by Pnueli, Lempel and Even~\cite{Pnueli1971} characterizes permutation graphs in terms of comparability graphs. In this work we define the $3$--hypergraph associated to cyclic permutation, and  cyclic comparability $3$--hypergraphs. 

Using these notions we extend the result mentioned above, by characterizing cyclic permutation $3$--hypergraphs in terms of cyclic comparability $3$--hypergraphs (Theorem~\ref{thm:comparability}). In order to prove Theorem~\ref{thm:comparability} we explore first its oriented version (Theorem\ref{thm:permutation}).

One of the most important problems studied within the study of cyclic ordersis the extendability of cyclic orders to complete (or total) cyclic orders (see \cite{Alles1991,Megiddo1976}). This is, when the orientation of each triplet in the cyclic order can be derived from a total cyclic order. Examples of cyclic orders that are not extendable are well known~\cite{Alles1991}. In this context, we exhibit a class of cyclic orders which are totally extendable (Theorem~\ref{thm:main_nes}).

\section{Transitive Oriented $3$--Hypergraphs and cyclic orders}

\label{sec:cyclic}

Let $X$ be a set of cardinality $n$. A \emph{partial cyclic order} of $X$ is a ternary relation $T\subset X^{3}$ which is \emph{cyclic}: $(x,y,z)\in T\Rightarrow (y,z,x)\in T$; \emph{asymmetric}: $(x,y,z)\in T\Rightarrow(z,y,x)\not\in T$; and \emph{transitive}: $(x,y,z),(x,z,w)\in T\Rightarrow(x,y,w)\in T$. If in addition $T$ is \emph{total}: for each $x\neq y\neq z\neq x$, either $(x,y,z)\in T$ or $(z,y,x)\in T$, then $T$ is called a \emph{complete cyclic order}. Partial cyclic orders has been studied in several papers, see \cite{Megiddo1976, Alles1991, Novak1982, Novotny1983} for excellent references.

A \emph{cyclic ordering} of $X$ is an equivalence class, $[\phi ]$, of the set of linear orderings  (i.e. bijections $\phi :[n]\rightarrow X$, where $[n]$ denotes the set  $\{ 1,2,\dots n\}$) with respect to the \emph{cyclic equivalence relation} defined as: $\phi \sim \psi $, if and only if there exist $k\leq n$ such that $\phi (i)=\psi (i+k)$ for every $i\in \lbrack n]$, where $i+k$ is taken (mod $n$). Note that complete cyclic orders and cyclic orderings correspond to the same concept in different contexts. We use the word \textquotedblleft order\textquotedblright \  to refer to a ternary relation, and the word \textquotedblleft ordering\textquotedblright \  to refer to the cyclic equivalence class.

For the reminder of this paper we will denote each cyclic ordering $[\phi]$ in cyclic permutation notation, $(\phi(1)\, \phi(2)\, \ldots\,\phi(n))$.  For example, there are two different cyclic orderings of $X=\{u,v,w\}$, namely $(u\, v\, w)$ and $(u\, w\, v)$, where $(u\, v\, w)=(v\, w\, u)=(w\,u\, v)$ and $(u\, w\, v)=(v\, u\, w)=(w\, v\, u)$.

\subsection{Transitive and Self Transitive oriented 3-hypergraphs}

We use the standard definition of $3$--hypergraph, to be a pair of sets
 $H=(V(H),E(H))$ where $V(H)$ is the vertex set of $H$, and
 the edge set of $H$ is $E(H)\subseteq {{V(H)}\choose{3}}$.

\begin{defi}
Let $H$ be a $3$--hypergraph. An \emph{orientation} of $H$ is an assignment of exactly one of the two possible cyclic orderings  to each edge. An orientation of a $3$--hypergraph is called an \emph{oriented $3$--hypergraph}, and we will denote the oriented edges by $O(H)$.
\end{defi}

\begin{defi} \label{def:transitive} 
An oriented $3$--hypergraph $H$ is \emph{transitive} if whenever $(u\, v\, z)$ and $(z\, v\, w) \in O(H)$ then $(u\, v\, w) \in O(H)$ (this implies $(u\, w\, z) \in O(H)$).
\end{defi}

Note that every $3$-subhypergraph of a transitive oriented $3$-hypergraph is transitive, fact that we will use throughout the rest of the paper. Also, there is a natural correspondence between partial cyclic orders and transitive oriented $3$--hypergraphs.

A transitive oriented $3$--hypergraph, $H$, with $E(H)={\binom{{V(H)}}{{3}}}$ is called a \emph{$3$--hypertournament}.  Let $TT_n^{3}$ be the oriented $3$--hypergraph with $V(TT_n^{3})=[n]$ and $E(H)= {\binom{{[n]}}{{3}}}$, where the orientation of each edge is the one induced by the cyclic ordering $(1\, 2\,\dots\, n)$. Clearly $TT_{n}^{3}$ is a transitive $3$--hypertournament on $n$ vertices. It is important to note that every transitive $3$--hypertournament on $n$ vertices is isomorphic to $TT_{n}^{3}$ which allows us  to hereafter refer to \emph{the} transitive  $3$--hypertournament on $n$ vertices.  This fact, is implied in the literature regarding cyclic orders, and that is indeed the reason why total, cyclic, asymmetric and transitive ternary relations are called complete cyclic orders (or circles as in \cite{Alles1991}).

Given a (non oriented) $3$--hypergraph the complement is naturally defined. For an oriented $3$--hypergraph it is not clear how to define its oriented complement. However, for  $H$ a spanning oriented subhypergraph of $TT_n^3$, we define its \emph{complement} as the oriented $3$--hypergraph $ \overline{H}$ with $V( \overline{H})=V(TT_n^{3})$ and $O(\overline{H})=O(TT_n^3)\setminus O(H)$.

\begin{defi}
An oriented $3$--hypergraph $H$ which is a spanning subhypergraph of $TT_n^3$ is called \emph{self-transitive} if it is transitive and its complement is also transitive.
\end{defi}

The following lemma that we will refer to it as the   \emph{eveness property} is not difficult to proof and it is left to the reader.

\begin{lem}\label{eveness} 
Let $H$ be a self transitive $3$--hypergraph, then all its induced $3$--subhypergraphs with four vertices necessarily have an even number of hyperedges.
\end{lem}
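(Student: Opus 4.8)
The plan is to reduce the lemma to a complete analysis of the four triples living on an arbitrary set of four vertices. Fix vertices $\{a,b,c,d\}\subseteq V(H)$ and let $H'$ be the induced oriented $3$--subhypergraph. Since $H$ is self--transitive, both $H$ and $\overline{H}$ are transitive, and transitivity is inherited by induced $3$--subhypergraphs; moreover the complement operation commutes with restriction, so $\overline{H'}$ (taken inside the copy of $TT_4^3$ carried by $\{a,b,c,d\}$) equals the restriction of $\overline{H}$. Hence $H'$ is itself self--transitive, and it suffices to prove that a self--transitive oriented $3$--hypergraph on four vertices has $0$, $2$ or $4$ edges. Because the four vertices carry the orientation induced by $TT_n^3$, which is isomorphic to $TT_4^3$, I may relabel them as $1<2<3<4$ so that the four available oriented triples are exactly $(1\,2\,3)$, $(1\,2\,4)$, $(1\,3\,4)$ and $(2\,3\,4)$.

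The heart of the argument is to determine which pairs of these four triples trigger Definition~\ref{def:transitive}. A transitivity trigger consists of two edges sharing two common vertices $v,z$, appearing as $(u\,v\,z)$ and $(z\,v\,w)$; since two distinct triples on four vertices meet in exactly one pair, each of the six pairs of triples can be tested directly against the fixed orientations. Carrying out this finite check, one finds that only two pairs actually form a trigger: $\{(1\,2\,3),(1\,3\,4)\}$ (reading $(1\,2\,3)=(2\,3\,1)$, this is the instance $u=2$, $v=3$, $z=1$, $w=4$) and, symmetrically, $\{(1\,2\,4),(2\,3\,4)\}$. In each case the conclusion of Definition~\ref{def:transitive} forces the remaining two triples to be present as well, so that whenever one of these two pairs is contained in an orientation, all four edges must be present; the other four pairs of triples impose no constraint.

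To finish, write $x_i\in\{0,1\}$ for the indicator that the $i$-th triple (in the order listed above) belongs to $O(H')$, and abbreviate $p=(x_1,x_3)$, $q=(x_2,x_4)$. Transitivity of $H'$ says that $p=(1,1)$ forces $q=(1,1)$ and that $q=(1,1)$ forces $p=(1,1)$; applying the same two triggers to $\overline{H'}$ says that $p=(0,0)$ forces $q=(0,0)$ and vice versa. Thus $p=(1,1)\Leftrightarrow q=(1,1)$ and $p=(0,0)\Leftrightarrow q=(0,0)$. A short inspection of the four possibilities for $p$ then gives the claim: if $p$ is $(1,1)$ or $(0,0)$ the two coordinates of $q$ agree with it and the total number of edges $x_1+x_2+x_3+x_4$ is $4$ or $0$; if $p$ is $(0,1)$ or $(1,0)$, then $q$ can be neither $(1,1)$ nor $(0,0)$, whence $q\in\{(0,1),(1,0)\}$ and the total is $1+1=2$. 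In every case the number of edges of $H'$ is even, proving the lemma. The only genuinely delicate point is the trigger computation of the middle paragraph: one must match the cyclic orientations of Definition~\ref{def:transitive} against those induced by $(1\,2\,3\,4)$ with the correct choice of middle vertex $v$ and pivot $z$, and it is precisely there that the asymmetry singling out the pairs $\{(1\,2\,3),(1\,3\,4)\}$ and $\{(1\,2\,4),(2\,3\,4)\}$ emerges.
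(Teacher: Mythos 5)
Your proof is correct. The paper itself leaves this lemma to the reader, so there is no official proof to compare against, but your central computation --- that among the six pairs of triples on four vertices only $\{(1\,2\,3),(1\,3\,4)\}$ and $\{(1\,2\,4),(2\,3\,4)\}$ traverse their common pair in opposite directions and hence trigger Definition~\ref{def:transitive}, each forcing the other two edges --- is exactly the ``opposite orders along their common pair of vertices'' analysis the authors carry out inside the proof of Lemma~\ref{lem:union2}, and your parity bookkeeping with $p=(x_1,x_3)$, $q=(x_2,x_4)$ applied to both $H'$ and $\overline{H'}$ correctly rules out odd edge counts.
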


\subsection{The oriented $3$--hypergraph associated to a cyclic permutation}

\label{sec:cyclic_permutation}

In resemblance to the graph associated to a linear permutation, we will now define the $3$--hypergraph associated to a cyclic permutation and study some of its properties.

A \emph{cyclic permutation} is a cyclic ordering of $[n]$. This is, an equivalence class $[\phi]$ of the set of bijections $\phi:[n] \rightarrow [n]$, in respect to the cyclic equivalence relation.  As mentioned before, a cyclic permutation $[\phi]$ will be denoted by $(\phi(1)\, \phi(2)\,\ldots\,\phi(n))$, and its \emph{reversed permutation} $[\phi^{\prime }]$ corresponds to $(\phi(n)\, \phi(n-1)\, \ldots\,\phi(1))$.

Let $[\phi]$ be a cyclic permutation. Three elements $i, j, k \in [n]$, with  $i< j< k$, are said to be in \emph{clock-wise order} in respect to $[\phi]$ if there is $\psi\in [\phi]$ such that $\psi^{-1}(i)<\psi^{-1}(j)<\psi^{-1}(k)$; otherwise the elements $i,j,k$ are said to be in \emph{counter-clockwise order} with respect to $[\phi]$.

\begin{defi}
The \emph{oriented $3$--hypergraph $H_{[\phi ]}$ associated to a cyclic permutation} $[\phi ]$ is the hypergraph with vertex set $V(H_{[\phi ]})=[n]$ whose edges are the triplets $\{i,j,k\}$, with $i<j<k$, which are in clockwise order in respect to $[\phi ]$, and whose edge orientations are induced by $[\phi ]$.
\end{defi}

It can be easily checked that for any cyclic permutation $[\phi]$, the associated oriented $3$--hypergraph $H_{[\phi]}$ is a transitive  oriented $3$--hypergraph naturally embedded in $TT_{n}^{3}$.  Moreover, the complement of  $H_{[\phi ]}$ is precisely $H_{[\phi ^{\prime }]}$, where $[\phi ^{\prime }]$ is the reversed cyclic permutation of $[\phi ]$. So, for every cyclic permutation $[\phi ]$, the $3$--hypergraph $H_{[\phi ]}$ is self-transitive.

A transitive $3$--hypergraph $H$ is called an \emph{cyclic permutation $3$--hypergraph} if there is a cyclic permutation $[\phi]$ such that $H_{[\phi]} \cong H$. Next we shall prove that the self--transitive property characterizes the class of oriented cyclic permutation $3$--hypergraphs.  For a vertex $v \in V(H)$ of an oriented $3$--hypergraph $H$, the \emph{link} of $v$, denote by $link_H (v)$, is the oriented graph with vertex and arc sets equal to: $V(link_H (v))= V(H)\setminus \{ v\}$, and $O(link_H (v))= \{ (uw) | \: \exists \: (v\,u\,w) \in O(H)\}$.

\begin{lem}
\label{link} Let $H$ be a self--transitive $3$--hypergraph. Then,  $link_H (v)$  is a self transitive oriented graph, for any $v\in V(H)$.
\end{lem}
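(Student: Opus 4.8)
The plan is to reduce everything to the transitivity of $H$ and of its complement $\overline{H}$ by means of a single structural observation: the link operation commutes with complementation, i.e.\ $link_{\overline{H}}(v)=\overline{link_H(v)}$, where the complement of the link is taken inside the transitive tournament $link_{TT_n^3}(v)$. Here "self-transitive oriented graph" is understood, by analogy with Definition~\ref{def:transitive}, to mean that both the graph and its complement are transitive (an oriented graph being transitive when $(ab),(bc)\in O$ imply $(ac)\in O$).

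First I would record that $link_{TT_n^3}(v)$ is a transitive tournament on $V(H)\setminus\{v\}$. For each pair $\{u,w\}$ the triplet $\{u,v,w\}$ carries exactly one orientation in $TT_n^3$, namely one of the two cyclic orderings $(v\,u\,w)$ or $(v\,w\,u)$; hence exactly one of the arcs $(uw),(wu)$ appears in the link. This identifies the ambient tournament in which $link_H(v)$ and its complement live, and transitivity of this tournament follows from the core step below applied to $TT_n^3$ itself.

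The core step is to show that $link_H(v)$ is transitive whenever $H$ is transitive. Suppose $(uw)$ and $(wx)$ are arcs of $link_H(v)$; by definition $(v\,u\,w),(v\,w\,x)\in O(H)$. Using cyclic invariance I rewrite the first edge as $(u\,w\,v)$, so that the two edges share the middle vertex $w$ and have $v$ as the common pivot, $v$ being the last coordinate of $(u\,w\,v)$ and the first coordinate of $(v\,w\,x)$. Applying Definition~\ref{def:transitive} with these roles yields $(u\,w\,x)\in O(H)$ together with $(u\,x\,v)\in O(H)$; since $(u\,x\,v)=(v\,u\,x)$, the arc $(ux)$ belongs to $link_H(v)$, so the link is transitive. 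The only delicate point, and the one to carry out carefully, is this cyclic bookkeeping that casts the two hyperedges into the exact shape demanded by the transitivity axiom.

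Finally I would invoke the commutation identity. From $O(\overline{H})=O(TT_n^3)\setminus O(H)$ it follows at once that $(v\,u\,w)\in O(\overline{H})$ iff $(v\,u\,w)\in O(TT_n^3)$ and $(v\,u\,w)\notin O(H)$; since $link_{TT_n^3}(v)$ is a tournament containing $link_H(v)$ as a spanning subgraph, this says precisely that $link_{\overline{H}}(v)$ is the complement of $link_H(v)$ inside $link_{TT_n^3}(v)$. Because $H$ is self-transitive, $\overline{H}$ is also transitive, so the core step applied to $\overline{H}$ shows that $link_{\overline{H}}(v)=\overline{link_H(v)}$ is transitive. Thus both $link_H(v)$ and its complement are transitive, i.e.\ $link_H(v)$ is self-transitive, completing the proof.
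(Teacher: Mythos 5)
Your proof is correct and follows essentially the same route as the paper: the core step, rewriting $(v\,u\,w)$ as $(u\,w\,v)$ so that the two hyperedges fit the pattern of Definition~\ref{def:transitive} and reading off $(v\,u\,x)\in O(H)$, is exactly the paper's argument. You are in fact more complete than the paper, which only verifies transitivity of $link_H(v)$ and leaves the complement half implicit; your identity $link_{\overline{H}}(v)=\overline{link_H(v)}$ inside $link_{TT_n^3}(v)$ supplies that missing half cleanly.
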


\begin{proof}
Let $(v\, v_i\, v_j)$ and $(v\, v_j\, v_k) \in O(H)$, by transitivity $(v\, v_i\, v_k) \in O(H)$. So that, if $link_H(v)$ contains arcs $(v_i \, v_j)$ and $(v_j \, v_k)$, then the transitivity of $H$ implies $(v_i\, v_k)$ is also an arc of $link_H(v)$.
\end{proof}

\begin{thm}
\label{thm:permutation} $H$ is an oriented cyclic permutation $3$--hypergraph if and only if $H$ is self transitive.
\end{thm}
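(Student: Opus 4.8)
The plan is to treat the two implications separately, with the reverse direction carrying essentially all of the weight. The forward implication is already contained in the remarks preceding the theorem: for any cyclic permutation $[\phi]$ the associated hypergraph $H_{[\phi]}$ is self-transitive, and self-transitivity is clearly preserved under isomorphism of oriented $3$--hypergraphs, so any $H\cong H_{[\phi]}$ is self-transitive.

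For the reverse direction I would start from a self-transitive spanning subhypergraph $H$ of $TT_n^3$ and reconstruct a cyclic ordering from it. The key construction is a ternary relation $T$ on $[n]$ obtained by keeping the orientation of every edge of $H$ and reversing the orientation of every edge of $\overline H$; concretely, for $i<j<k$ set $(i\,j\,k)\in T$ when $\{i,j,k\}\in E(H)$ and $(i\,k\,j)\in T$ when $\{i,j,k\}\in E(\overline H)$. Because each triple lies in exactly one of $H$ and $\overline H$, the relation $T$ assigns exactly one cyclic orientation to every triple, so it is automatically total, cyclic and asymmetric; the entire difficulty is to prove that $T$ is transitive, i.e.\ a complete cyclic order.

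To check transitivity I would reduce to four vertices, since the axiom $(x,y,z),(x,z,w)\in T\Rightarrow(x,y,w)\in T$ involves only four elements, and a total, cyclic, asymmetric relation on four points is transitive exactly when its restriction is the complete cyclic order of one of the six cyclic orderings of those points. So I fix $a<b<c<d$, label the four triples $t_1=\{a,b,c\}$, $t_2=\{a,b,d\}$, $t_3=\{a,c,d\}$, $t_4=\{b,c,d\}$ (each carrying its standard $TT_4^3$ orientation), and ask which subsets can be the set of triples kept by $T$, equivalently the edge set of $H$ on $\{a,b,c,d\}$. Enumerating the six cyclic orderings shows the admissible subsets are exactly $\emptyset$, the full set, and the four pairs $\{t_1,t_2\}$, $\{t_3,t_4\}$, $\{t_2,t_3\}$, $\{t_1,t_4\}$, the forbidden pairs being $\{t_1,t_3\}$ and $\{t_2,t_4\}$. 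The eveness property (Lemma~\ref{eveness}) guarantees that the number of edges of $H$ on $\{a,b,c,d\}$ is even, which eliminates the one- and three-edge cases outright. The remaining work, and the step I expect to be the main obstacle, is ruling out the two forbidden pairs in the two-edge case: if $H$ contained exactly $t_1=(a\,b\,c)$ and $t_3=(a\,c\,d)$, then applying Definition~\ref{def:transitive} with common middle vertex $c$ (reading $t_1$ as $(b\,c\,a)$ and $t_3$ as $(a\,c\,d)$) forces $(b\,c\,d)=t_4\in O(H)$, contradicting the edge count, and the pair $\{t_2,t_4\}$ is excluded by the same argument applied to the complement $\overline H$, which is transitive as well. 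Care is needed here to select the correct rotation of each edge so that the transitivity trigger applies.

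Once $T$ is shown to be a complete cyclic order, it is the complete cyclic order of a unique cyclic ordering $[\phi]$ of $[n]$, using the standard identification between complete cyclic orders and cyclic orderings already invoked for $TT_n^3$. Unwinding the definitions, a triple $\{i,j,k\}$ with $i<j<k$ is an edge of $H_{[\phi]}$ precisely when $(i\,j\,k)\in T$, and by the construction of $T$ this happens precisely when $\{i,j,k\}\in E(H)$, with matching orientations. Hence $H=H_{[\phi]}$, so $H$ is an oriented cyclic permutation $3$--hypergraph, which finishes the proof.
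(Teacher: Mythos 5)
Your proof is correct, but it takes a genuinely different route from the paper's. The paper proves the nontrivial direction by induction on the number of vertices: it deletes vertex $n$, applies the induction hypothesis to $H\setminus\{n\}$, uses Lemma~\ref{link} together with the classical fact that self-transitive oriented graphs are permutation graphs to realize $link_H(n)$ as $H_\psi$, and then (Claims 1 and 2) shows that $\psi$ lies in the cyclic class produced by the induction and that appending $n$ yields the desired cyclic permutation. You instead argue in one shot: you form the $3$--hypertournament $H\cup \mathrm{rev}(\overline H)$ and verify that it is a complete cyclic order by checking all four-vertex restrictions, using the evenness property (Lemma~\ref{eveness}) to eliminate the odd edge counts and the transitivity of $H$ and of $\overline H$ to exclude the two ``crossing'' pairs $\{t_1,t_3\}$ and $\{t_2,t_4\}$; your enumeration of the six admissible edge sets and your application of Definition~\ref{def:transitive} with middle vertex $c$ are both correct. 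This is essentially the content and the proof of the paper's Lemma~\ref{lem:union2} (stated there for arbitrary transitive orientations of $H$ and $\overline H$ and used to derive Theorems~\ref{thm:comparability} and~\ref{thm:main_nes}): you have in effect rediscovered that lemma, specialized it to the orientations $H$ and the reversal of $\overline H$, and read Theorem~\ref{thm:permutation} off from it directly. What the paper's induction buys is an explicit vertex-by-vertex construction of the cyclic permutation and a use of the link construction; what your route buys is brevity and the avoidance of the delicate case analysis in the paper's Claim 1, at the cost of leaning on the identification of complete cyclic orders with cyclic orderings (the uniqueness of $TT_n^3$), which the paper asserts without proof. Two small points worth a sentence each in a final write-up: the transitivity axiom for $T$ involves four necessarily distinct points (coincidences are excluded by asymmetry), so the four-vertex reduction is legitimate; and in the forward direction ``self-transitivity is preserved under isomorphism'' should be read with the caveat that the definition of self-transitive presupposes an embedding in $TT_n^3$ --- though the paper is equally terse there.
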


\begin{proof}

If  there is a cyclic permutation $[\phi]$ such that $H_{[\phi]} \cong H$, then clearly $H$ is self transitive.  

Conversely, let $H$ be a self transitive oriented $3$--hypergraph with $n$ vertices. We proceed by induction on the number of vertices, in order to prove that $H$ is an orientedcyclic permutation $3$--hypergraph.  The statement is obvious for $n=3$.

Assume $n\geq 4$, and label the vertices of $H$ by $V(H)=\{1,2, ...,n\}$.  Consider the $3$--hypergraph obtained from $H$ by removing the vertex $n$, this is $H\setminus \{n\}$. Since $H\setminus \{n\}$ is a self transitive $3$--hypergraph, thus by induction hypothesis there is a cyclic permutation $[\varphi]$ of $[n-1]$, such that $(H\setminus \{n\})\cong H_{[\varphi]}$.

Consider now, the oriented graph $link_H (n)$. By Lemma~\ref{link}, $link_H (n)$ is a self transitive oriented graph, and therefore there is a linear permutation $\psi$, such that $link_H (n)\cong H_{\psi}$.
In the remainder of the proof we will show that the cyclic equivalent class of $\psi$ is precisely  $[\varphi]$, and that $[\varphi]$ can be extend to a cyclic permutation $[\phi]$ of $[n]$ in such a way that $H\cong H_{[\phi]}$.

\textbf{Claim 1.} $\psi \in [\varphi]$

Since $[\varphi]$ is a cyclic permutation, with out lost of generality we may assume that $\varphi(1)=\psi(1)$. Next we will prove that $\varphi(i)=\psi(i)$ for every $i\in [n-1]$. By contradiction, let $j\in [n-1]$ be the first integer such that $\varphi(j)\neq \psi(j)$. Then, the cyclic ordering of $\{\varphi(1),\varphi(j),\psi(j)\}$ induced by $[\varphi]$ is $(\varphi(1)\,\varphi(j)\,\psi(j))$. Hence, either $(\varphi(1)\,\varphi(j)\,\psi(j))\in O(H)$ or $(\varphi(1),\psi(j)\,\varphi(j))\in O(H)$. With out lost of generality we may suppose the first case,  then one of the following holds: $\varphi(1)< \varphi(j)< \psi(j)$ or $\psi(j)< \varphi(1)< \varphi(j)$ or $\varphi(j)< \psi(j)<\varphi(1)$.  Assume first $\varphi(1)< \varphi(j)< \psi(j)$,  note that $(\varphi(j)\,\psi(j))\not\in O(link_H (n))$, thus $(n\,\psi(j)\, \varphi(j))\in O(H)$, and by transitivity $(n\,\varphi(j)\,\varphi(1))\in O(H)$, which is a contradiction since $(\varphi(1)\,\varphi(j)), (\varphi(1)\,\psi(j))\in O(link_H (n))$. The other cases follows by similar arguments, so the claim holds.

Consider now the cyclic permutation 
$(\psi(1)\,\psi(2)\,...\,\psi(n-1)\,n)$ and call it $[\phi]$.

\textbf{Claim 2.} $H\cong H_{[\phi]}$.

Recall $(H\setminus \{n\})\cong H_{[\varphi]}$. Since $\psi \in [\varphi]$ then the orientations of all edges in $H\setminus \{n\}$ are induced by $[\phi]$. It remains to prove that the orientations of all edges containing $n$, are induced by $[\phi]$. To see this, note that the orientations of such edges are given by $link_H (n)$, and for every $j\in [n-1]$ it happens that $\phi^{-1}(j)<\phi^{-1}(n)=n$. Thus the orientations of all edges containing $n$ are induced by $[\phi]$, which completes the proof.

\end{proof}


\subsection{Cyclic comparability $3$--hypergraphs}

\label{sec:compa}

One of the classic results in the study of permutation graphs is their characterization in terms of comparability graphs also referred in the literature \cite{Golumbic1980} as transitively oriented graphs. The aim of this section is to  prove an equivalent result for cyclic permutations and cyclic comparability $3$--hypergraphs.

\begin{defi}
A $3$--hypergraph, $H$, is called a \emph{cyclic comparability hypergraph} if it admits a transitive orientation.
\end{defi}

\begin{lem}
\label{lem:union2}Let $H$ be a cyclic comparability $3$--hypergraph such that $\overline{H}$ is also a cyclic comparability $3$--hypergraph. Let $H_{o}$ and $\overline{H_{o}}$ be any transitive oriented $3$--hypergraphs whose underlying $3$--hypergraphs are $H$ and $\overline{H}$ respectively. Then the union $H_{o} \cup \overline{H_{o}}$ is isomorphic to $TT_n^3$.
\end{lem}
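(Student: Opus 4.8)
The plan is to show that the oriented $3$--hypergraph $U := H_o \cup \overline{H_o}$ is a transitive $3$--hypertournament; once this is established, the uniqueness of the transitive $3$--hypertournament on $n$ vertices (recorded earlier in the text) immediately yields $U \cong TT_n^3$. The first, routine, observation is that $U$ really is an orientation of the complete $3$--hypergraph: since $\overline{H}$ is the (non--oriented) complement of $H$, the edge sets $E(H)$ and $E(\overline H)$ partition $\binom{[n]}{3}$, so every triple of $[n]$ receives exactly one orientation, coming either from $H_o$ or from $\overline{H_o}$. Thus $E(U)=\binom{[n]}{3}$, and it only remains to verify transitivity.

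To prove that $U$ is transitive, I would suppose $(u\,v\,z),(z\,v\,w)\in O(U)$ and show $(u\,v\,w)\in O(U)$, distinguishing cases according to whether these two edges belong to $O(H_o)$ or to $O(\overline{H_o})$. If both lie in $O(H_o)$, transitivity of $H_o$ gives $(u\,v\,w)\in O(H_o)\subseteq O(U)$; symmetrically if both lie in $O(\overline{H_o})$. The crux is the mixed case, say $(u\,v\,z)\in O(H_o)$ and $(z\,v\,w)\in O(\overline{H_o})$ (the other mixed case being symmetric). Here $\{u,v,z\}\in E(H)$ and $\{v,w,z\}\in E(\overline H)$, so these two triples lie in \emph{different} parts of the partition. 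The edge $\{u,v,w\}$ receives some orientation in $U$, and I argue by contradiction that it cannot be $(u\,w\,v)$.

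The two sub-cases of the contradiction both exploit the fact that a triple cannot belong to $E(H)$ and $E(\overline H)$ at once. If $\{u,v,w\}\in E(H)$ with $(u\,w\,v)\in O(H_o)$, then rewriting the given cyclic edges as $(z\,u\,v)$ and $(v\,u\,w)$ and applying the transitivity axiom to $H_o$ with pivot $u$ forces an orientation of $\{v,w,z\}$ into $O(H_o)$, contradicting $\{v,w,z\}\in E(\overline H)$. If instead $\{u,v,w\}\in E(\overline H)$ with $(u\,w\,v)\in O(\overline{H_o})$, then rewriting $(z\,v\,w)$ and $(w\,v\,u)$ and applying transitivity of $\overline{H_o}$ with pivot $v$ forces an orientation of $\{u,v,z\}$ into $O(\overline{H_o})$, contradicting $\{u,v,z\}\in E(H)$. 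Hence the orientation of $\{u,v,w\}$ is $(u\,v\,w)$, which lies in $O(U)$, so $U$ is transitive.

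The main obstacle is precisely this mixed case: because the triples of a $3$--hypergraph need not all be present, one cannot reason directly about $\{u,v,w\}$, and the point is to choose the right rotation of the given cyclic edges so that the transitivity axiom, applied \emph{inside a single one} of $H_o$ or $\overline{H_o}$, produces an oriented copy of the ``missing'' triple and thereby places it simultaneously in $E(H)$ and $E(\overline H)$ -- an impossibility. With transitivity of $U$ in hand, the conclusion $H_o\cup\overline{H_o}\cong TT_n^3$ follows at once from the uniqueness of the transitive $3$--hypertournament; notably, this route does not seem to require the eveness property of Lemma~\ref{eveness}.
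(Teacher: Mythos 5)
Your proof is correct, but it takes a genuinely different route from the paper. The paper reduces transitivity of $H_o\cup\overline{H_o}$ to the induced subhypergraphs on four vertices and then classifies those configurations by edge count: the $0$--$4$ and $4$--$0$ splits are trivial, the odd splits are excluded by the eveness property (Lemma~\ref{eveness}), and the $2$--$2$ split is settled by enumerating the possible transitive orientations and exhibiting an explicit common cyclic order of the four vertices. You instead verify the transitivity axiom for $U=H_o\cup\overline{H_o}$ directly: the unmixed cases are immediate from transitivity of $H_o$ and of $\overline{H_o}$, and in the mixed case you rule out the wrong orientation of $\{u,v,w\}$ by showing (via a suitable rotation of the hypotheses and one application of the transitivity rule inside a single side) that it would force a fourth triple to lie in both $E(H)$ and $E(\overline{H})$, which is impossible since these edge sets partition $\binom{[n]}{3}$. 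I checked your two sub-cases and the rotations are legitimate: $(z\,u\,v),(v\,u\,w)\in O(H_o)$ does yield $(z\,w\,v)\in O(H_o)$, hence $\{v,w,z\}\in E(H)$, and symmetrically for the other sub-case. Your approach is shorter and more self-contained --- it needs neither the eveness lemma nor any enumeration of four-vertex configurations --- while the paper's approach has the virtue of producing the explicit cyclic ordering of each four-vertex set and of reusing machinery (Lemma~\ref{eveness}) already developed. One small point worth making explicit in a final write-up: in the mixed case the edge $\{u,v,w\}$ is guaranteed to carry some orientation in $U$ only because $E(H)$ and $E(\overline H)$ together cover all triples, so completeness of $U$ must be established before, not after, the transitivity check --- which is indeed the order in which you present it.
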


\begin{proof}

Clearly $H_{o} \cup \overline{H_{o}}$ is a complete $3$--hypergraph, thus, we only need to verify that  $H_{o} \cup \overline{H_{o}}$ is transitive. Observed, that the, transitivity in 3-hypergraphs is a local condition, therefore it is sufficient to check that the transitivity follows for every set of four vertices.

Let $F$ be any comparability $3$--hypergraph with four vertices such that $\overline{F}$ is also a comparability $3$--hypergraph.   We shall prove that $F$ is transitive by giving a cyclic order of its vertices.

If $F$ has no edges, or four edges, the transitivity follows. So we might assume both $F$ and $\overline{F}$ have two edges each, as if any of them has three edges, by the evenness condition (Lemma~\ref{eveness}), it can't be oriented transitively.

We might assume with out lost of generality that $E(F)=\{\{v_i, v_j, v_k\}, \{v_i, v_j, v_l\}\}$ and $ E(\overline{F})=\{\{v_i, v_k, v_l\}, \{v_j, v_k, v_l\}\}$. Then for the orientation of $F$ we might assume $O(F)=\{(v_i\; v_j\; v_k), (v_i\; v_j\; v_l) \}$ (otherwise we might relabel the vertices) and $O(\overline{F})=\{(v_i\; v_k\; v_l), (v_j\;v_k\; v_l)\}$ or $O(\overline{F})=\{(v_i\; v_l\; v_k), (v_j\; v_l\; v_k)\}$.

Assume $O(\overline{F})=\{(v_i\; v_k\; v_l), (v_j\;v_k\; v_l)\}$. Out of all the six possible pairings of the four edges in $H \cup \overline{H}$,  only two have opposite orders along their common pair of vertices, namely $\{(v_i\; v_j\; v_k), (v_i\; v_k\; v_l)\}$ and $\{(v_i\; v_j\; v_l), (v_j\; v_k\; v_l)\}$. By transitivity the first pairing indicates that the orientations of the two other edges are induced from the ciclic order $(v_i\; v_j\; v_k\; v_l)$. For the proof of the case, $O(\overline{F})=\{(v_i\; v_l\; v_k), (v_j\; v_l\; v_k)\}$ the argument is the same.
\end{proof}

An un-oriented $3$--hypergraph $H$ is called a \emph{cyclic permutation $3$--hypergraph} if $H$ can be oriented in such a way that the resulting oriented $3$--hypergraph is an oriented cyclic permutation $3$--hypergraph.
The following Theorem is a direct Corollary of  Lemma \ref{lem:union2} and Theorem \ref{thm:permutation}.

\begin{thm}\label{thm:comparability} 
A $3$-- hypergraph $H$ is a cyclic permutation $3$--hypergraph if and only if $H$ and $\overline{H}$ are cyclic comparability $3$--hypergraphs.
\end{thm}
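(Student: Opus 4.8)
The plan is to derive the theorem as an immediate consequence of Theorem~\ref{thm:permutation} and Lemma~\ref{lem:union2}, by passing between the oriented and the un-oriented settings. The one structural fact I would isolate first is this: if $H_o$ is any orientation of $H$ realized as a spanning subhypergraph of $TT_n^3$, then its complement $\overline{H_o}$ (with $O(\overline{H_o})=O(TT_n^3)\setminus O(H_o)$) has underlying un-oriented $3$--hypergraph exactly $\overline{H}$, since $\binom{[n]}{3}\setminus E(H)=E(\overline{H})$. Thus transitive orientations of $\overline{H}$ and complements-inside-$TT_n^3$ of orientations of $H$ are the same objects, and self-transitivity of $H_o$ is precisely the statement that both $H$ and $\overline{H}$ are being oriented transitively at once.

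For the forward direction, I would suppose $H$ is a cyclic permutation $3$--hypergraph. By definition $H$ admits an orientation $H_o$ that is an oriented cyclic permutation $3$--hypergraph, so $H_o\cong H_{[\phi]}$ for some cyclic permutation $[\phi]$; by Theorem~\ref{thm:permutation} (equivalently, since every $H_{[\phi]}$ was already noted to be self-transitive) $H_o$ is self-transitive. Hence $H_o$ is a transitive orientation of $H$, proving $H$ is a cyclic comparability $3$--hypergraph, and $\overline{H_o}$ is a transitive orientation whose underlying hypergraph is $\overline{H}$, proving $\overline{H}$ is a cyclic comparability $3$--hypergraph.

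For the converse, I would suppose both $H$ and $\overline{H}$ are cyclic comparability $3$--hypergraphs, and fix transitive orientations $H_o$ of $H$ and $K_o$ of $\overline{H}$. By Lemma~\ref{lem:union2} the union $H_o\cup K_o$ is isomorphic to $TT_n^3$. The step I expect to carry the real content is reading off from this isomorphism that $H_o$ embeds as a spanning subhypergraph of $TT_n^3$ with $O(K_o)=O(TT_n^3)\setminus O(H_o)$; since $E(H)$ and $E(\overline{H})$ partition $\binom{[n]}{3}$, this forces $K_o=\overline{H_o}$. Then $H_o$ is transitive and its complement $\overline{H_o}=K_o$ is transitive, so $H_o$ is self-transitive, and Theorem~\ref{thm:permutation} gives that $H_o$ is an oriented cyclic permutation $3$--hypergraph; therefore $H$ is a cyclic permutation $3$--hypergraph. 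The only delicate point, and the main obstacle, is justifying that the abstract isomorphism furnished by Lemma~\ref{lem:union2} can be used to identify $K_o$ with the complement of $H_o$ inside a single copy of $TT_n^3$; once this identification is in place the rest is a direct invocation of the two cited results.
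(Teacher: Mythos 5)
Your proposal is correct and follows exactly the route the paper intends: the paper states Theorem~\ref{thm:comparability} as a direct corollary of Lemma~\ref{lem:union2} and Theorem~\ref{thm:permutation} without writing out the details, and your argument supplies precisely those details (including the correct identification of $K_o$ with $\overline{H_o}$ inside $TT_n^3$ via the partition $E(H)\cup E(\overline{H})=\binom{[n]}{3}$). No gaps.
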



\subsection{Total extendability of a certain class of cyclic orders}

It is a well known fact ~\cite{Alles1991, Megiddo1976} that, in contrast to
linear orders, not every cyclic order can be extended to a complete cyclic
order, which in our setting means that not every transitive $3$--hypergraph
is a subhypergraph of a transitive $3$--hypertournament.

The decision whether a cyclic order is totally extendable is known to be
NP-complete, and examples of cyclic orders that are not extendable are well
known~\cite{Alles1991, Megiddo1976}. In ~\cite{Alles1991} the authors
exhibit some classes of cyclic orders which are totally extendable.

As a direct consequence of Lemma \ref{lem:union2} we can state the following
theorem stating a sufficient condition for the extendability of cyclic
orders.


\begin{thm}
\label{thm:main_nes} Let $T$ be a partial cyclic order on $X$, if the complement relation $\overline{%
T}$ is a cyclic order then $T$ is totally extendable.
\end{thm}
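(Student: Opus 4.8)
The plan is to translate the statement into the language of transitive oriented $3$--hypergraphs developed in this section and then invoke Lemma~\ref{lem:union2} almost directly. Set $n=|X|$. By the natural correspondence between partial cyclic orders and transitive oriented $3$--hypergraphs noted after Definition~\ref{def:transitive}, the relation $T$ gives rise to a transitive oriented $3$--hypergraph $H_o$ on vertex set $X$ whose oriented edges are exactly the triples oriented by $T$. Let $H$ denote its underlying (unoriented) $3$--hypergraph; since $H$ admits the transitive orientation $H_o$, it is a cyclic comparability $3$--hypergraph by definition.

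Next I would read off the hypothesis. The complement relation $\overline{T}$ orients precisely those triples of $X$ left unoriented by $T$, so that $T$ and $\overline{T}$ together assign an orientation to every triple of $X$. Assuming $\overline{T}$ is a cyclic order, it likewise corresponds to a transitive oriented $3$--hypergraph $\overline{H_o}$ whose underlying $3$--hypergraph $\overline{H}$ is exactly the complement of $H$, i.e. $E(H)$ and $E(\overline{H})$ partition $\binom{X}{3}$. In particular $\overline{H}$ is also a cyclic comparability $3$--hypergraph.

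With these two complementary cyclic comparability $3$--hypergraphs in hand, Lemma~\ref{lem:union2} applies: the union $H_o\cup\overline{H_o}$ is isomorphic to $TT_n^3$, hence is a transitive $3$--hypertournament, and by the correspondence it is a complete cyclic order on $X$. Since $H_o\subseteq H_o\cup\overline{H_o}$, the original relation $T$ is contained in this complete cyclic order, which is exactly what it means for $T$ to be totally extendable, completing the proof.

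The main point to get right is the bookkeeping of the translation rather than any new combinatorics: one must verify that $H$ and $\overline{H}$ really are complementary as unoriented $3$--hypergraphs (so that their union is complete, as Lemma~\ref{lem:union2} requires), and that being \emph{totally extendable} is equivalent to being a subhypergraph of some $TT_n^3$. The pleasant feature, and the reason the argument is short, is that Lemma~\ref{lem:union2} guarantees the union is $TT_n^3$ for \emph{any} choice of transitive orientations of $H$ and $\overline{H}$; thus no compatibility between the orientations arising from $T$ and from $\overline{T}$ needs to be arranged by hand, and the transitivity of the combined relation comes for free.
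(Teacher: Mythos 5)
Your proposal is correct and follows essentially the same route as the paper: associate to $T$ and $\overline{T}$ their transitive oriented $3$--hypergraphs, observe that the underlying hypergraphs are complementary cyclic comparability $3$--hypergraphs, and apply Lemma~\ref{lem:union2} to conclude the union is $TT_n^3$, which gives the total extension. The extra bookkeeping you spell out (that $E(H)$ and $E(\overline{H})$ partition $\binom{X}{3}$, and that extendability means being a subhypergraph of some $TT_n^3$) is implicit in the paper's shorter proof.
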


\begin{proof}
Let $H_{T}$ and $H_{\overline T}$ be the transitive oriented
$3$--hypergraphs associated to the cyclic orders $T$ and $\overline
T$, respectively. Then by Lemma \ref{lem:union2}, $H_{T} \cup
H_{\overline T} \cong TT_{n}^{3}$, so that there is a cyclic
ordering of the elements in $X$ that induces all orientations of
edges in $H_{T}$, this is $T$ is totally extendable.
\end{proof}

\section{Conclusions}

The proofs of all theorems on oriented $3$-hypergraphs in this paper follow naturally from the definition of transitivity and, not surprisingly, all concepts defined for oriented $3$-hypergraphs are reminiscent of the corresponding notions for oriented graphs; such evidence allows us to suggest that perhaps many parts of the theory of transitive graphs can be extended to transitive $3$-hypergraphs.\\
In particular, we are interested in the extension of the concept of perfection for hypergraphs. That was indeed the reason why we started the exploration on the subject of transitive $3$-hypergraphs with the extension of the concepts of comparability graphs and permutation graphs, which are two important classes of perfect graphs \cite{Golumbic1980}.

\medskip

\noindent The authors will like to thank the support from Centro de
Innovaci\'on Matem\'atica A.C.

\end{document}